\documentclass[reqno]{amsart}

\usepackage[latin1]{inputenc}
\usepackage{amsmath}
\usepackage{amsfonts}
\usepackage{amssymb}
\usepackage{graphics}
\usepackage{color}

\usepackage{enumerate}
\usepackage{amssymb,amsmath,amsthm,amscd}
\usepackage{latexsym,verbatim,graphicx,amsfonts}

\usepackage{amscd}
\usepackage{amsmath}
\usepackage{amssymb}
\usepackage{amsthm}
\usepackage{latexsym}
\usepackage{verbatim}
\usepackage{hyperref}

\bibliographystyle{siam}

\theoremstyle{plain}
\newtheorem{theorem}{Theorem}[section]
\newtheorem{thm}[theorem]{Theorem}
\newtheorem{thm*}{Theorem}

\newtheorem{cor}[theorem]{Corollary}

\newtheorem{prop}[theorem]{Proposition}
\theoremstyle{definition}

\newtheorem{ex}[theorem]{Example}
\theoremstyle{remark}
\newtheorem{rem}[theorem]{Remark}

\newcommand{\R}{\mathbb{R}}
\newcommand{\C}{\mathbb{C}}

\newcommand{\DD}{\mathbb{D}}
\newcommand{\N}{\mathbb{N}}

\def\MD{\mathcal{M}(\mathcal{D})}
\def\D{\mathcal D}

\subjclass[2010]{Primary 47A15; Secondary 30H10, 30H20.}

\begin{document}
\title[Wandering property]{On the wandering property in Dirichlet spaces}
\author[Gallardo-Guti\'errez]{Eva A. Gallardo-Guti\'errez}
\address{Eva A. Gallardo-Guti\'errez \newline Departamento de An\'alisis Matem\'atico y Matem\'atica Aplicada,\newline
Facultad de Matem\'aticas,
\newline Universidad Complutense de
Madrid, \newline
 Plaza de Ciencias N$^{\underbar{\Tiny o}}$ 3, 28040 Madrid,  Spain
 \newline
and Instituto de Ciencias Matem\'aticas (CSIC-UAM-UC3M-UCM),
\newline Madrid,  Spain }\email{eva.gallardo@mat.ucm.es}

\author[Partington]{Jonathan R. Partington}
\address{Jonathan R. Partington \newline School of Mathematics, \newline  University of Leeds,\newline  Leeds LS2 9JT, U. K.} \email{J.R.Partington@leeds.ac.uk}

\author[Seco]{Daniel Seco}
\address{Daniel Seco \newline Universidad Carlos III de Madrid and Instituto de Ciencias Matem\'aticas (CSIC-UAM-UC3M-UCM), Departamento de Matem\'aticas UC3M, Avenida de la Universidad 30,  28911 Legan\'es (Madrid),
Spain.} \email{dsf$\underline{\,\,\,}$cm@yahoo.es}

\thanks{This work is partially supported by Plan Nacional  I+D grant no. MTM2016-77710-P, Spain. E. A. Gallardo-Guti\'errez and D. Seco are also supported by the European Regional Development
Fund and by Severo Ochoa Programme for Centers of Excellence in R$\&$D, project SEV-2015-0554 (Spain), which, in particular gave support in the form of a postdoctoral grant for Seco at Instituto de Ciencias Matem\'aticas ICMAT}

%    General info
\subjclass[2010]{Primary 47B38}

\date{\today}

\keywords{Wandering subspace property, Dirichlet spaces, shift operators, Blaschke products, renorming}

\begin{abstract}
We show that in a scale of weighted Dirichlet spaces $D_{\alpha}$, including the Bergman space, given any finite Blaschke  product $B$ there exists  an equivalent norm in $D_{\alpha}$ such that $B$ satisfies the \emph{wandering subspace property} with respect to such norm. This extends, in some sense, previous results by Carswell, Duren and Stessin \cite{CDS}. As a particular instance, when $B(z)=z^k$ and $|\alpha| \leq \frac{\log (2)}{\log(k+1)}$, the chosen norm is the usual one in  $D_\alpha$.
\end{abstract}

\maketitle

\section{Introduction}\label{Intro}

For isometries $T$ acting on complex, separable, infinite dimensional Hilbert spaces $\mathcal{H}$, the classical Wold Decomposition Theorem asserts that whenever $T$ is pure ($\bigcap_{n=0}^{\infty} T^n \mathcal{H}=\{0\}$), the closed subspace $\mathcal{K}=\mathcal{H}\ominus T\mathcal{H}$ has the \emph{wandering subspace property} in $\mathcal{H}$: $\mathcal{H}$ coincides with the smallest closed invariant subspace under $T$ generated by $\mathcal{K}$, denoted by $[\mathcal{K}]_T$. This is a consequence of the fact that $\mathcal{H}$ decomposes as the orthogonal direct sum of closed subspaces
$$
\mathcal{H}=\mathcal{K}\oplus T\mathcal{K}\oplus T^2 \mathcal{K}\oplus \dots
$$
 More generally, a subspace of a Hilbert space is called a \emph{wandering subspace} of a given operator if it is orthogonal to its images under positive powers of the operator. In this regards, the Wold Decomposition Theorem says that every invariant subspace of a pure isometry is indeed, generated by a wandering subspace.\smallskip

Well known examples arise when considering multiplication operators induced by inner functions in the classical Hardy space $H^2$. Recall that an inner function $\theta$ is an analytic function in the unit disc $\mathbb{D}$ with contractive values ($|\theta(z)|\leq 1$ for $z\in \mathbb{D}$) such that the boundary  values
$$
\theta(e^{it}):=\lim_{r\to 1^{-}} \theta(re^{it})
$$
have modulus 1 for almost all $t$ (they exist for almost every $t$ with respect to Lebesgue measure on the unit circle). In such cases,  every closed subspace $\mathcal{M}$  in $H^2$ invariant under multiplication by $\theta$  is wandering and
\begin{equation*}
[\mathcal{M}\ominus \theta \mathcal{M}]_{\theta}=\mathcal{M}.
\end{equation*}
Accordingly, $\theta$ is said to have the \emph{wandering subspace property} (WSP).\smallskip

Nevertheless, it is not completely understood yet which functions $\varphi$ in $H^{\infty}$ (the space of bounded analytic functions on $\mathbb{D}$) enjoy the WSP in $H^2$, that is, for which functions do the corresponding multiplication operators $M_{\varphi}$ on $H^2$ satisfy
\[[\mathcal{M}\ominus \varphi \mathcal{M}]_{M_\varphi}=\mathcal{M}\]
for every closed invariant subspace $\mathcal{M}$. In \cite{KhLaSt}, it was shown that a necessary condition is that $\varphi$ be writable as the composition $G\circ h$, where $h$ is an inner function and $G$ is univalent in $\mathbb{D}$. Moreover, they also proved a sufficient condition, namely, $\varphi=G\circ h$ with $G$ a weak-star generator of $H^\infty$.  Whether this last condition is in fact a necessary one is left open.\smallskip

The question turns out to be drastically difficult to handle whenever the underlying Hilbert space is the Bergman space $A^2$. In a remarkable paper, Aleman, Richter and Sundberg \cite{ARS} proved that $\varphi(z)=z$  possesses the WSP in $A^2$. However, for univalent functions, Carswell \cite{Ca} showed the existence of bounded univalent functions $\varphi$  in $\mathbb{D}$, vanishing at the origin and failing to have the WSP both in $H^2$ and $A^2$. Indeed, previously in \cite{CDS}, the authors had provided necessary conditions for $H^{\infty}$ functions to have the WSP in $A^2$. They showed that in particular, not every inner function has this property in $A^2$ and even that infinite Blaschke products without it can be found. For finite Blaschke products, the question in the Bergman space remains open (see \cite{CaW}).\smallskip

The main goal of this work is showing that not only in the Bergman space but also in a scale of weighted Dirichlet spaces $D_{\alpha}$ including $A^2$, for every finite Blaschke product $B$, it is possible to renorm the space (with an equivalent norm) such that $B$ enjoys the wandering subspace property. This seems to follow an opposite direction to a recent work by our third author \cite{Secozk}, in which renormings were found of the same spaces allowing one to \emph{disprove} the corresponding WSP for multiplication by some monomials. In sum, one conclusion of the present work is that the geometry of the space plays a significant role in order to deal with this question, since its answer depends strongly on the norm expression.\smallskip

The rest of the manuscript is organized as follows. In Section \ref{Sect 2} we recall some preliminaries, introducing the family of weighted Dirichlet spaces $D_{\alpha}$, where our work takes place. We will recall Shimorin's Theorem \cite{Shimorin}, which provides a unified proof of the theorems of Beurling \cite{Beu} and Aleman, Richter and Sundberg \cite{ARS} and shows, in particular, that $\varphi(z)=z$  possesses the WSP in the scale of $D_{\alpha}$ spaces considered.
In addition, we introduce some basic results justifying the direction of the proof of our main results, which will be proved in Section \ref{Sect 3}.
Moreover, some consequences are derived, including the observation that for a range of $\alpha$ the WSP holds for $\varphi(z)=z^k$ ($k \geq 1$) even with the original norm. Finally, and even though we were not able to answer this question for $A^2$ with its usual norm, we were able to establish the WSP for $z^k$ acting on its finite codimensional subspace $z^k A^2$.

\section{The setting}\label{Sect 2}

\subsection{Dirichlet-type spaces} Let $\alpha$ be a real number. The Dirichlet-type space $D_{\alpha}$ consists of analytic functions
$f(z)=\sum_{k=0}^{\infty} a_k z^k$ in $\DD$ such that its norm
$$
\|f\|_\alpha := \left ( \sum_{k=0}^{\infty}
|a_k|^2 (k+1)^{\alpha} \right )^{1/2}
$$
is finite. Observe that particular instances of $\alpha's$ yield well-known Hilbert spaces of analytic functions in $\mathbb{D}$. More precisely, when $\alpha = -1$ we have the classical Bergman space $A^2$, $\alpha = 0$ corresponds to the Hardy space $H^2$, and $\alpha = 1$ to the Dirichlet space $\mathcal{D}$. Note that the continuous inclusion $D_{\beta} \subsetneq D_{\gamma }$ holds for all $\gamma <\beta$, i.e.,
$ \Vert f\Vert_{\gamma} \leq \Vert f \Vert_{\beta}\textnormal{ for all } f\in D_{\beta}\textnormal{ and } \gamma <\beta.$
Moreover, when $\beta >1$ the spaces $D_{\beta} $ are continuously embedded in the disc algebra $\mathcal{A}$.

Dirichlet-type spaces are particular instances of general \textit{weighted Hardy spaces}, introduced by Shields \cite{Shields} to study weighted shifts in $\ell^2$. There is an extensive literature on these spaces, and we refer the reader to \cite[Chapter 2]{CMc}, for instance.\smallskip

Recall that an analytic function $u$ in $\mathbb{D}$ is a \emph{multiplier} of $D_{\alpha}$, if the analytic Toeplitz operator $T_u: f \mapsto u   f$ is defined everywhere on $D_{\alpha}$ (and hence bounded, by the Closed Graph Theorem). A well known fact about the  Dirichlet space is that the algebra $\MD$ of all the multipliers of $\D$ is not easy to describe. In particular, the strict inclusion $\MD \subset \D \cap H^\infty$ holds.
Indeed, the elements of $\MD$ were characterized by Stegenga \cite{St} in a notable paper, in terms of a condition involving the logarithmic capacity of their boundary values. We refer to \cite{Wu} for multipliers and Carleson measures in Dirichlet spaces and to \cite{EFKMR} for more on the subject of multipliers of $D_\alpha$.

In any case, it is not difficult to prove that every finite Blaschke product is a multiplier of $D_{\alpha}$  for all $\alpha \in \R$. Recall that a finite Blaschke product is given by
\[B(z)= e^{i\theta} \, \prod_{i=1}^{N} \frac{z-\alpha_i}{1-\overline{\alpha_i}z}, \qquad (z\in \mathbb{D})\]
where $\alpha_i\in \mathbb{D}$, counted according to its (prescribed) multiplicity. Finite Blaschke products play an important role in mathematics and connected areas such as complex geometry, linear algebra, operator theory and systems. We refer to the recent monograph \cite{GMR} for a detailed account of these results.

In order to analyze whether any finite Blaschke product $B$ satisfies the WSP in $D_{\alpha}$, we begin by considering the concrete example
$B(z)=z^2$ acting on the Bergman space $A^2$ (an open problem specifically posed in \cite{CaW}).

\subsection{Vector valued shifts}
The following approach is based on some ideas described in \cite{Nor, Par}. We may consider the space $A^2$ as a
direct (\emph{orthogonal}) sum of two copies of itself, $A^2_1$ and $A^2_2$, where a function $f \in A^2$ is decomposed as
\[f(z)= f_1(z^2)+zf_2(z^2).\] It is clear that $f \in A^2$ if and only if $f_1, f_2 \in A^2$, but we are imposing different equivalent
norms on each copy of $A^2$. We may think either $A_1^2$ and $A^2_2$ equipped with their usual norms and their sum $A^2$ equipped with the norm arising from such sum, or on the contrary, $A^2$ with usual norm decomposed as sum of two subspaces which inherit some comparable norm. We consider here the first of those choices. By doing so, we may view the operator $M_{z^2}$ as a diagonal matrix shift sending $(f_1,f_2) \in A^2 \oplus A^2$ to $(Sf_1, Sf_2)$. In this sense, $M_{z^2}$ may be expressed as
\begin{equation}\label{matshift}\begin{pmatrix} S & 0 \\ 0 & S
\end{pmatrix}.\end{equation}

The techniques developed by Nordgren when trying to solve Problem
151 in \cite{Hal} suggest a particular direction to study the
problem we have in mind. If $B(z)=z^k$ does not satisfy the WSP,
some closed invariant subspace $\mathcal{M}$ such that
\begin{equation}\label{eq1}
\mathcal{M}\neq [\mathcal{M}\ominus z^k \mathcal{M}]_{z^k}
\end{equation}
could, perhaps, be described through a
finite number of linear conditions. For instance, a finite number of
generators $h_1,...,h_r \in H^{\infty}$ multiplied by functions
$f_1,...,f_r \in A^2$, which, in addition, satisfy some finite number of
restrictions on their Taylor coefficients. It seems difficult to
come up with restrictions on the Taylor coefficients involving
coefficients of degree higher than $k$, and still generate a
non trivial closed invariant subspace of $A^2$. However, it
appears plausible that a counterexample  may be found for $M_{z^2}$ looking at how the
matrix operator \eqref{matshift} acts on the product space.

This is the idea behind the proofs of the following preliminary results, in which it is possible to guarantee that
a closed invariant subspace $\mathcal{M}$ for $M_{z^2}$, that is, $\mathcal{M}\in Lat(M_{z^2})$, is generated by $\mathcal{M}\ominus z^2\mathcal{M}$ whenever either $\mathcal{M}$ is also invariant for the shift, or decomposable as direct sum of
closed subspaces in each of the two copies of $A^2$, say $\mathcal{M}_1
\subset A^2_1$ and $\mathcal{M}_2 \subset A^2_2$.

\begin{prop}\label{prop1}
Let $\mathcal{M} \in Lat(M_z)$, then $\mathcal{M}=[ \mathcal{M} \ominus z^2 \mathcal{M} ]_{z^2}$.
\end{prop}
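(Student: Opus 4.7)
The plan is to leverage the Aleman--Richter--Sundberg theorem (giving the WSP for $M_z$ on $A^2$). Applied to $\mathcal{M}$, this yields $\mathcal{M} = [\mathcal{W}]_{M_z}$ with $\mathcal{W} := \mathcal{M} \ominus z\mathcal{M}$; applied to the closed $M_z$-invariant subspace $z\mathcal{M}$ (closed because $M_z$ is bounded below on $A^2$), it yields $z\mathcal{M} = [\mathcal{W}']_{M_z}$ with $\mathcal{W}' := z\mathcal{M} \ominus z^2\mathcal{M}$. The orthogonal chain $\mathcal{M} = \mathcal{W} \oplus z\mathcal{M} = \mathcal{W} \oplus \mathcal{W}' \oplus z^2\mathcal{M}$ then identifies $\mathcal{M} \ominus z^2\mathcal{M} = \mathcal{W} \oplus \mathcal{W}'$.

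Set $\mathcal{N} := [\mathcal{W} \oplus \mathcal{W}']_{M_{z^2}}$. The inclusion $\mathcal{N} \subseteq \mathcal{M}$ is immediate from the $M_{z^2}$-invariance of $\mathcal{M}$. For the reverse, my plan is to establish that $\mathcal{N}$ is $M_z$-invariant: once known, $\mathcal{N}$ is a closed $M_z$-invariant subspace containing $\mathcal{W}$, and a further application of ARS yields $\mathcal{N} \supseteq [\mathcal{W}]_{M_z} = \mathcal{M}$. Since $\mathcal{N}$ is already $M_{z^2}$-invariant by construction, the $M_z$-invariance reduces to the containment $z(\mathcal{W} \oplus \mathcal{W}') \subseteq \mathcal{N}$, i.e., showing $z\mathcal{W} \subseteq \mathcal{N}$ and $z\mathcal{W}' \subseteq \mathcal{N}$.

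The hard part will be showing $zw \in \mathcal{N}$ for $w \in \mathcal{W}$; the case $w \in \mathcal{W}'$ is analogous. Since $zw \in z\mathcal{M} = \mathcal{W}' \oplus z^2\mathcal{M}$, one writes $zw = u + z^2 v$ with $u \in \mathcal{W}' \subseteq \mathcal{N}$ and $v \in \mathcal{M}$. Iterating the decompositions $\mathcal{M} = \mathcal{W} \oplus z\mathcal{M}$ and $z\mathcal{M} = \mathcal{W}' \oplus z^2\mathcal{M}$ on successive remainders, one obtains an expansion
\[
zw = u_0 + \sum_{k=1}^{N} z^{2k}(w_k + u_k) + z^{2(N+1)} v_N,
\]
where $u_0, u_k \in \mathcal{W}'$, $w_k \in \mathcal{W}$, and $v_N \in \mathcal{M}$; all but the last term lie in $\mathcal{N}$. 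The main technical obstacle will be verifying that the remainder satisfies $\|z^{2(N+1)} v_N\|_{A^2} \to 0$ as $N \to \infty$. This will require combining the Bergman-space norm identity $\|z^m f\|_{A^2}^2 = \sum_k |a_k|^2/(m+k+1)$ (for $f = \sum_k a_k z^k$), the lower bound $\|M_z f\|_{A^2} \geq \|f\|_{A^2}/\sqrt{2}$, and the key fact that $\bigcap_n z^n \mathcal{M} = \{0\}$, which forces the terms $z^{2(N+1)} v_N$ to penetrate into deeper powers $z^n \mathcal{M}$ and vanish in norm.
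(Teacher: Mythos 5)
Your setup is correct and your reduction is a reasonable one: $\mathcal{M}\ominus z^2\mathcal{M}=\mathcal{W}\oplus\mathcal{W}'$ does hold because $\mathcal{M}\supseteq z\mathcal{M}\supseteq z^2\mathcal{M}$ (and $z\mathcal{M}$, $z^2\mathcal{M}$ are closed since $M_z$ is bounded below on $A^2$), and it does suffice to show $z\mathcal{W}\cup z\mathcal{W}'\subseteq\mathcal{N}$. The gap is the remainder estimate $\|z^{2(N+1)}v_N\|\to 0$, which is the entire content of the proposition, and the three ingredients you list do not combine to give it. Each step of your iteration recovers $v_k$ from $z^2v_k$ via the lower bound $\|z^2f\|\geq\|f\|/2$, and the orthogonal projections only give $\|z^2v_{k+1}\|\leq\|v_k\|$; chaining these yields nothing better than $\|v_N\|\lesssim 2^{N}\|zw\|$. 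Against this, $\|z^{2N}g\|\to 0$ for each \emph{fixed} $g$ but not uniformly on bounded sets (for $g$ concentrated at Taylor coefficients of order $\gg N$ one has $\|z^{2N}g\|\approx\|g\|$), so the product of the two bounds diverges and no estimate on $v_N$ of the type you have access to forces the remainder to die. You also cannot get convergence of the partial sums $S_N$ from orthogonality: although $w_k+u_k\perp z^2\mathcal{M}$, the operator $M_{z^2}$ is not an isometry on $A^2$, so the terms $z^{2k}(w_k+u_k)$ are \emph{not} mutually orthogonal and no Bessel-type inequality is available. Finally, $\bigcap_n z^n\mathcal{M}=\{0\}$ identifies the limit of $R_N:=z^{2(N+1)}v_N$ as $0$ only \emph{after} one knows that $R_N$ converges (each $z^{2M}\mathcal{M}$ is closed and contains the tail of the sequence); it cannot produce the convergence itself. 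In effect you are attempting to run the Wold decomposition for the non-isometry $M_{z^2}$ by direct iteration of one-step orthogonal splittings, and the failure of exactly this iteration is what makes the wandering subspace property nontrivial in Bergman-type spaces in the first place.

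For comparison, the paper's proof never iterates and never has to control a remainder. It applies the Aleman--Richter--Sundberg theorem once to get that the polynomial multiples $pf$, with $f\in\mathcal{W}=\mathcal{M}\ominus z\mathcal{M}$, are dense in $\mathcal{M}$; it then splits each polynomial into its even and odd parts, $p(z)=q_0(z^2)+zq_1(z^2)$, and uses the orthogonal decomposition of $A^2$ into even and odd functions (two copies of $A^2$ with induced norms, on which $M_{z^2}$ acts diagonally as in \eqref{matshift}) to place the two resulting pieces inside $[\mathcal{M}\ominus z\mathcal{M}]_{z^2}$ and $[z\mathcal{M}\ominus z^2\mathcal{M}]_{z^2}$ respectively, both of which sit inside $[\mathcal{M}\ominus z^2\mathcal{M}]_{z^2}$. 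If you want to salvage your route, you need an argument at the level of finite sums (or a summability statement of Shimorin type) rather than the greedy expansion you propose.
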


\begin{proof}
Since $\mathcal{M} \supset z \mathcal{M} \supset z^2 \mathcal{M}$, we have the decomposition \[  \mathcal{M}
\ominus z^2 \mathcal{M} = (\mathcal{M} \ominus z \mathcal{M}) \oplus  (z \mathcal{M} \ominus z^2 \mathcal{M}).\]

Since $\mathcal{M} \in Lat(M_z)$, Aleman, Richter and Sundberg's Theorem \cite{ARS} yields $\mathcal{M}= [\mathcal{M} \ominus z \mathcal{M}]_z$, or equivalently
$$
\mathcal{M}= \overline{\{pf: p \in \mathcal{P}, f
\in  \mathcal{M} \ominus z \mathcal{M}\}},$$
where $\mathcal{P}$ denotes the space of all polynomials.

We decompose $\mathcal{P}$ as the span of
$\widetilde{\mathcal{P}}_0:=\{p:p(z)=q(z^2), q \in \mathcal{P}\}$
and $\widetilde{\mathcal{P}}_1:=\{p:p(z)=zq(z^2), q \in
\mathcal{P}\}$. Choosing the induced norm in each copy of $A^2$, we
have that
\[\mathcal{M} \subset [ \mathcal{M} \ominus z\mathcal{M} ]_{z^2} \oplus [ z\mathcal{M} \ominus z^2 \mathcal{M}]_{z^2} \subset [ \mathcal{M} \ominus z^2 \mathcal{M} ]_{z^2}.\] The other inclusion
($[ \mathcal{M} \ominus z^2 \mathcal{M} ]_{z^2} \subset \mathcal{M}$) is always satisfied.
\end{proof}

\begin{prop}\label{prop2}
Let $\mathcal{M}=\mathcal{M}_1 \oplus \mathcal{M}_2$ with $\mathcal{M}_1 \subset A^2_1$, $\mathcal{M}_2 \subset
A^2_2$. Then $\mathcal{M}=[ \mathcal{M} \ominus z^2 \mathcal{M}]_{z^2}.$
\end{prop}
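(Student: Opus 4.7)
The strategy is to reduce the claim to the Aleman--Richter--Sundberg theorem applied separately to each summand $\mathcal{M}_i$ inside $A^2_i$. The crucial structural fact, already exploited in Proposition~\ref{prop1}, is that $M_{z^2}$ is diagonal with respect to the orthogonal decomposition $A^2 = A^2_1 \oplus A^2_2$, acting as the usual Bergman shift on each copy under the natural isometric identifications $A^2 \to A^2_i$ given by $g(w) \mapsto g(z^2)$ and $g(w) \mapsto z\,g(z^2)$ (isometric precisely because each $A^2_i$ is equipped with its own copy of the standard $A^2$ norm, as chosen in the setup).

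First, assuming tacitly that $\mathcal{M} \in \mathrm{Lat}(M_{z^2})$ and observing that $M_{z^2}$ preserves each $A^2_i$, each summand $\mathcal{M}_i = \mathcal{M} \cap A^2_i$ is itself invariant under $M_{z^2}$. Next, because the ambient decomposition is orthogonal and $z^2 \mathcal{M}_i \subset A^2_i$, one obtains
$$\mathcal{M} \ominus z^2 \mathcal{M} = (\mathcal{M}_1 \ominus z^2 \mathcal{M}_1) \oplus (\mathcal{M}_2 \ominus z^2 \mathcal{M}_2).$$
Transferring the problem through the isometric identifications above to $A^2$ with its standard shift, the Aleman--Richter--Sundberg theorem yields $\mathcal{M}_i = [\mathcal{M}_i \ominus z^2 \mathcal{M}_i]_{z^2}$ for $i=1,2$. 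Taking the direct sum gives
$$\mathcal{M} = \mathcal{M}_1 \oplus \mathcal{M}_2 \subset [\mathcal{M}_1 \ominus z^2 \mathcal{M}_1]_{z^2} \oplus [\mathcal{M}_2 \ominus z^2 \mathcal{M}_2]_{z^2} \subset [\mathcal{M} \ominus z^2 \mathcal{M}]_{z^2},$$
and the reverse inclusion is automatic, as in Proposition~\ref{prop1}.

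There is no serious obstacle; the only delicate point is keeping track of the norm convention, which is deliberately chosen so that each identification $A^2 \cong A^2_i$ is isometric and hence ARS applies verbatim to each summand. Had one instead worked with the norms that the subspaces $A^2_i$ inherit as subspaces of the ambient $A^2$ (the alternative convention mentioned in the text), one would first have to pass back to an equivalent-norm formulation before invoking ARS.
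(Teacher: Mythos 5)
Your proof is correct and follows essentially the same route as the paper: identify each $A^2_i$ isometrically with $A^2$ so that $M_{z^2}$ becomes the standard Bergman shift on each summand, apply the Aleman--Richter--Sundberg theorem to each $\mathcal{M}_i$ separately, and combine via the orthogonal decomposition of $\mathcal{M} \ominus z^2\mathcal{M}$. Your added care about the tacit $M_{z^2}$-invariance of the $\mathcal{M}_i$ and the choice of norm convention only makes explicit what the paper leaves implicit.
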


\begin{proof}
Since $\mathcal{M}_1$, $\mathcal{M}_2$ are shift invariant, it will be a direct
consequence of the main theorem in \cite{ARS}:
$$\mathcal{M}= \mathcal{M}_1 \oplus \mathcal{M}_2 = [ \mathcal{M}_1 \ominus z \mathcal{M}_1 ]_{z} \oplus
[ \mathcal{M}_2 \ominus z \mathcal{M}_2 ]_{z}.
$$

Since $\mathcal{M}\ominus z^2\mathcal{M}$ contains the
direct sums $\mathcal{M}_1 \ominus z\mathcal{M}_1$ and $\mathcal{M}_2 \ominus z\mathcal{M}_2$, then $\mathcal{M}$ is
generated by $\mathcal{M}\ominus z^2\mathcal{M}$. Again, the other inclusion ($[ \mathcal{M}
\ominus z^2 \mathcal{M} ]_{z^2} \subset \mathcal{M}$) is always satisfied.
\end{proof}

If we call $\mathcal{M}_1 = \mathcal{M} \cap A^2_1$ and $\mathcal{M}_2 = \mathcal{M} \cap A^2_2$, it is
necessarily true that $\mathcal{M}_1 \perp \mathcal{M}_2$, $\mathcal{M}_1$ and $\mathcal{M}_2$ are shift
invariant, and $\mathcal{M}_1 \oplus \mathcal{M}_2 \supset \mathcal{M}$ but $\mathcal{M}$ may be defined,
for instance, through restrictions between the $\mathcal{M}_1$ and $\mathcal{M}_2$
components.

On the other hand,  it is possible to provide invariant spaces $\mathcal{M}\in Lat(M_{z^2})$ not satisfying the
hypotheses of Propositions \ref{prop1} and \ref{prop2}, but such that $\mathcal{M}= [\mathcal{M} \ominus z^2 \mathcal{M}]_{z^2}$:

\begin{ex}\label{example 1}
Let $a \in \C \backslash \{0\}$, $h(z)=1+az$, and $\mathcal{M}=[ h ]_{z^2}$.
Then $\mathcal{M} = [\mathcal{M} \ominus z^2 \mathcal{M}]_{z^2}$ since $h$ is orthogonal to $z^2 \mathcal{M}$ and generates $\mathcal{M}$.
\end{ex}

Notice that in this case, if we denote $\mathcal{M}_i = \mathcal{M} \cap A_i^2$, we have
$\mathcal{M}_1=\mathcal{M}_2=A^2$ but $1 \in A^2 \backslash \mathcal{M}$, and $\mathcal{M} \in Lat(M_{z^2})
\backslash Lat(M_z)$. It can be shown that any space generated by a
finite collection of elements without any relations also provides similar examples (being the norm imposed on
$A^2$ the product of $k$ usual norms for collections of $k$
elements).

\subsection{Shimorin's Theorem} The main contribution regarding the wandering subspace property in a variety of spaces was carried out by Shimorin in \cite{Shimorin}. In particular, he showed that $\varphi(z)=z$ satisfies the WSP in $D_{\alpha}$ for $\alpha \in [0,1]$
since the operators of multiplication by $z$ are \emph{concave}, i.e., for every $x \in D_\alpha$, $\|T^2x\|^2-2\|Tx\|^2+\|x\|^2\leq 0$. For $\alpha \in [-1,0)$, the WSP follows as a consequence of the following result:
\begin{thm*}[Shimorin]
Let $T$ be a bounded operator in a Hilbert space $H$ such that the
following hold: \begin{itemize}
\item[(i)] $\bigcap_{n  \in \N} T^n H = \{0\}$
\item[(ii)] For $x$, $y \in H$, we have \[\|x+Ty\|^2 \leq 2(\|Tx\|^2 +
\|y\|^2).\]
\end{itemize}
Then $T$ has the wandering subspace property in $H$.
\end{thm*}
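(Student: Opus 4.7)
The plan is to fix a non-zero closed $T$-invariant subspace $\mathcal{M}\subset H$, put $\mathcal{E}:=\mathcal{M}\ominus T\mathcal{M}$ and $\mathcal{N}:=[\mathcal{E}]_T$, and establish $\mathcal{M}=\mathcal{N}$ (the reverse inclusion is automatic). The first observation is that $\mathcal{E}\neq\{0\}$ whenever $\mathcal{M}\neq\{0\}$: otherwise $\mathcal{M}=T\mathcal{M}$ would iterate to $\mathcal{M}\subset\bigcap_n T^n H=\{0\}$, contradicting~(i). Taking $y=0$ in~(ii) gives $\|x\|^2\le 2\|Tx\|^2$, so $T$ is bounded below; in particular each $T^n\mathcal{M}$ is closed and the orthogonal decompositions below are legitimate.

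For $f\in\mathcal{M}$, I would iterate the splitting $\mathcal{M}=\mathcal{E}\oplus T\mathcal{M}$: setting $f_0=f$ and $f_k=e_k+Tf_{k+1}$ with $e_k\in\mathcal{E}$ and $f_{k+1}\in\mathcal{M}$, one obtains
$$
f=\sum_{k=0}^{n-1}T^k e_k+T^n f_n,\qquad \|f_k\|^2=\|e_k\|^2+\|Tf_{k+1}\|^2,
$$
for every $n\ge 1$. The task reduces to showing $T^n f_n\to 0$ in $H$, since then $f$ is the norm limit of the partial sums $\sum_{k=0}^{n-1}T^k e_k$ and therefore lies in $\mathcal{N}$.

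The essential input is~(ii). Its direct one-step application, with $x=e_k$ and $y=f_{k+1}$, gives $\|f_k\|^2\le 2\|Te_k\|^2+2\|f_{k+1}\|^2$; subtracting the orthogonal identity yields $\|Tf_{k+1}\|^2-2\|f_{k+1}\|^2\le 2\|Te_k\|^2-\|e_k\|^2$, but iterating this pointwise bound does not produce a contracting recursion. The plan is instead to apply~(ii) to \emph{block} pairs of the form $x=\sum_{j=0}^{n-1}T^j e_j$ and $y$ chosen so that $Ty$ reconstructs the tail $T^n f_n$, or equivalently to exploit the Cauchy dual $T(T^*T)^{-1}$ (which condition~(ii) turns into an expansive operator) so as to telescope into an effective bound $\|T^n f_n\|^2\le\|f\|^2-\sum_{k<n}\|T^k e_k\|^2+o(1)$. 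Then condition~(i) enters to close the argument: since $T^n f_n\in T^n H$ and $T$ is bounded below, any weak accumulation point of $(T^n f_n)$ lies in $\bigcap_n T^n H=\{0\}$, and combined with the norm control this forces $T^n f_n\to 0$.

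The main obstacle I expect is precisely the absence of isometry: the wandering pieces $T^k\mathcal{E}$ need not be mutually orthogonal in $H$, so the naive Wold-type decomposition does not converge automatically, and the proof must compensate by exploiting the quantitative ``near-isometric'' content of~(ii) in a cumulative (rather than iterative) way.
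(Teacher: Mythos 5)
First, a remark on the comparison itself: the paper does not prove this statement. It is quoted as a black box from Shimorin's article \cite{Shimorin} and then applied to the shift on $D_\alpha$ (and, implicitly, to its restrictions to invariant subspaces, since hypotheses (i) and (ii) are inherited by such restrictions). So your attempt can only be judged on its own merits, and as it stands it is a plan rather than a proof. The preparatory steps are fine: $\mathcal{E}=\mathcal{M}\ominus T\mathcal{M}\neq\{0\}$, the lower bound $\|x\|^2\le 2\|Tx\|^2$ from (ii) with $y=0$, the closedness of $T^n\mathcal{M}$, and the recursion $f=\sum_{k<n}T^ke_k+T^nf_n$. But the decisive step --- how inequality (ii) is converted into control of the tails --- is exactly the point you defer (``the plan is instead to apply (ii) to block pairs \dots or equivalently to exploit the Cauchy dual \dots so as to telescope into an effective bound''). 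That step is the entire content of Shimorin's theorem; without it, nothing beyond the trivial inclusion $[\mathcal{E}]_T\subseteq\mathcal{M}$ has been established.

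Two concrete warnings about the route you sketch. (1) Proving $T^nf_n\to 0$ in norm is strictly stronger than what is needed (density of $\bigvee_kT^k\mathcal{E}$), and Shimorin's argument deliberately avoids it. For any operator bounded below, with left inverse $L=(T^*T)^{-1}T^*$ and Cauchy dual $T'=L^*=T(T^*T)^{-1}$, one has $TL=I-P_{\mathcal{E}}$, hence $\ker L^n=\mathcal{E}+T\mathcal{E}+\dots+T^{n-1}\mathcal{E}$ and therefore the exact identity $\bigl[\mathcal{E}\bigr]_T^{\perp}=\bigcap_{n}(T')^{n}H$. Thus the wandering subspace property is \emph{equivalent} to $\bigcap_n(T')^nH=\{0\}$, and the real work is to show that (ii) forces $\bigcap_n(T')^nH\subseteq\bigcap_nT^nH$, after which (i) finishes; no convergence of the Wold-type series is required. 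Your proposal contains no substitute for this estimate. (2) Even granting a bound of the form $\|T^nf_n\|^2\le\|f\|^2-\sum_{k<n}\|T^ke_k\|^2+o(1)$, your closing inference ``weak convergence to $0$ plus norm control forces norm convergence'' is not valid as stated: weak convergence together with an upper bound on norms does not yield norm convergence, and since the subspaces $T^k\mathcal{E}$ are not mutually orthogonal, $\sum_{k<n}\|T^ke_k\|^2$ does not control $\|f-T^nf_n\|^2$ from below. You would need an additional lower-semicontinuity or Parseval-type argument to close this loop, and supplying it essentially amounts to redoing Shimorin's duality estimate.
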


Observe that Shimorin's approach only applies to the \emph{usual norms} in
$D_\alpha$ (those described above). In the recent paper
 \cite{Secozk}, Seco has shown for each
$\alpha \in \R$ and each positive integer $k \geq 6$, the existence of an equivalent
norm $\|\cdot\|$ in $D_\alpha$ and $\mathcal{M}\in Lat(M_{z^k})$ that fails to have the wandering property with
respect to the norm $\|\cdot\|$, that is,
$$\mathcal{M}\neq [\mathcal{M}\ominus z^k \mathcal{M}]_{z^k} \mbox{ respect to } \|\cdot\|.$$
In particular, this is shown in some cases to be
the usual norm for $D_\alpha$: for instance when $k \geq 10$ and
$\alpha <-(5k+\frac{700}{(k-9)^2})$, or when $\alpha \in (-16-\varepsilon, -16+\varepsilon)$ and $k=6$, but numerical results
hint that for $\alpha <-4.2$ there might be $k$ large enough providing counterexamples (see also \cite{Nowak} for related results in this direction). The results in the next Section will establish, nevertheless, that by means of renormings it is possible to have the WSP for any finite Blaschke product.

\section{The wandering subspace property under renormings}\label{Sect 3}

In this section, we show that in any $D_{\alpha}$ with  $\alpha \in [-1,1]$ (where $\varphi(z)=z$ meets the WSP), given any finite Blaschke $B$ product, it is possible to renorm the space (with an equivalent norm) such that $B$ also has the WSP.

Before that, observe that Example \ref{example 1}  may be generalized to the case where instead of
$z^2$ we make use of any finite Blaschke product. For a function $f
\in H^2$, given any finite Blaschke product, $B$, it is clear from the Wold
decomposition that we can express $f$ as
\[f(z)= \sum_{k=0}^\infty B^k(z) h_k(z),\] where $h_k$ are functions
in the model space $H^2 \ominus BH^2$, and the norm of $f$ may be
found from those of $h_k$. Indeed, \[\|f\|_{0}^2 = \sum_{k=0}^\infty
\|h_k\|_0^2,\]
where, recall that $\|\cdot \|_0$ corresponds to the $H^2$-norm.

In \cite{CGGP}, the authors find an analogous expansion for the
$D_\alpha$ spaces:

\begin{thm}[Chalendar, Gallardo-Guti\'errez, Partington]\label{Thm CGP}
Let $\alpha \in [-1,1]$ and $B$ any finite Blaschke product. Then $f \in D_{\alpha}$ if and only if $f = \sum_{k=0}^\infty h_k B^k$ (convergence in $D_{\alpha}$ norm) with $h_k \in K_B$ and
$$\sum_{k=0}^\infty(k+1)^{\alpha}\|h_k\|_0^2 < \infty.$$
\end{thm}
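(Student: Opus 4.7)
The plan is to deduce both implications of the theorem from a single two-sided norm equivalence, established first for finite $B$-expansions and then extended by density. Since the model space $K_B = H^2 \ominus B H^2$ has dimension equal to $\deg B$, and since iterated Euclidean division of a polynomial $p$ by $B$ yields a unique finite expansion $p = \sum_{k=0}^N h_k B^k$ with $h_k \in K_B$, and since polynomials are dense in $D_\alpha$, it suffices to prove the estimate
\begin{equation}\label{cgp:key}
\Bigl\|\sum_{k=0}^N h_k B^k\Bigr\|_\alpha^2 \asymp \sum_{k=0}^N (k+1)^\alpha \|h_k\|_0^2,
\end{equation}
with constants depending only on $B$ and $\alpha$, for every finite sum. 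Both directions of the theorem then follow by Cauchy completeness, applied to polynomial approximants of $f \in D_\alpha$ on the one hand, and to partial sums of the putative series on the other.

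The first step towards \eqref{cgp:key} is the diagonal estimate $\|B^k h\|_\alpha^2 \asymp (k+1)^\alpha \|h\|_0^2$ for every $h \in K_B$. This will be deduced from the integral representation
\[\|f\|_\alpha^2 \asymp |f(0)|^2 + \int_{\DD} |f'(z)|^2(1-|z|^2)^{1-\alpha}\,dA(z),\]
valid throughout $\alpha \in [-1,1]$. Using the product rule $(B^k h)' = k B^{k-1}B' h + B^k h'$, the dominant contribution for large $k$ comes from the term $k^2|B|^{2(k-1)}|B'|^2|h|^2$. Finite-dimensionality of $K_B$ together with the smooth extension of its elements to $\overline{\DD}$ yields $|h|^2 \asymp \|h\|_0^2$ up to bounded factors, and the change of variable $w = B(z)$ --- available because $B$ is a proper holomorphic self-map of $\DD$ of degree $\deg B$ --- combined with the Schwarz--Pick comparison $1-|B(z)|^2 \asymp 1-|z|^2$, converts the integral into $k^2\int_{\DD}|w|^{2(k-1)}(1-|w|^2)^{1-\alpha}\,dA(w) \asymp (k+1)^\alpha$. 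The remaining terms in the expansion of $|(B^k h)'|^2$ contribute only $O((k+1)^{\alpha-1})$.

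For $\alpha = 0$ the subspaces $B^k K_B$ are pairwise orthogonal in $H^2$ by the classical Wold decomposition, so \eqref{cgp:key} is an equality and the diagonal estimate alone suffices. For $\alpha \neq 0$, however, the cross inner products $\langle B^j h_j, B^k h_k \rangle_\alpha$ do not vanish, and one must verify that the normalised matrix
\[a_{j,k} := \frac{\langle B^j h_j, B^k h_k\rangle_\alpha}{(j+1)^{\alpha/2}(k+1)^{\alpha/2}\,\|h_j\|_0\,\|h_k\|_0}\]
defines a bounded operator on $\ell^2(\N)$. The same change of variable, applied to the cross integrals arising from $(B^j h_j)'\overline{(B^k h_k)'}$, should yield decay in $|j-k|$ fast enough to pass a Schur test, thereby securing both inequalities in \eqref{cgp:key}.

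The hard part will be precisely this uniform control of the off-diagonal terms across the full range $\alpha \in [-1,1]$: the weight $(1-|z|^2)^{1-\alpha}$ degenerates at the endpoints, and the $\deg B$ preimages produced by $w = B(z)$ must be aggregated without losing the summable decay. Finite-dimensionality of $K_B$, which ensures uniform comparability of $\|h\|_0$ with any other reasonable norm on $K_B$ (including the supremum norm on $\overline{\DD}$), is the tool that ultimately allows these branchwise contributions to be handled in a unified way.
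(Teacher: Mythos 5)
Your plan has a genuine gap at exactly the point you flag as ``the hard part'': the off-diagonal control of $a_{j,k}$ is asserted (``should yield decay in $|j-k|$'') but never established, and the tools you propose cannot deliver it as described. The change of variable $w=B(z)$ together with $1-|B(z)|^2\asymp 1-|z|^2$ gives only two-sided \emph{comparability} of the transported weight with $(1-|w|^2)^{1-\alpha}$, and comparability is useless for cross terms: the vanishing of $\int_{\DD} w^{j-1}\bar{w}^{k-1}(1-|w|^2)^{1-\alpha}\,dA(w)$ for $j\neq k$ comes from rotational invariance, i.e.\ from exact cancellation, which is lost the moment the density is known only up to bounded factors. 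What you would actually need is uniform Fourier decay in the angular variable of the branch-summed densities $w\mapsto\sum_{\nu}h_j(z_\nu(w))\overline{h_k(z_\nu(w))}\,(1-|z_\nu(w)|^2)^{1-\alpha}$, and these are not smooth near the critical values of $B$ or at the boundary. Two further steps are wrong as stated. First, if $B(0)\neq 0$ then \emph{no} nonzero polynomial admits a finite expansion $\sum_{k\le N}h_kB^k$ with $h_k\in K_B$: a polynomial $p$ lies in $K_{B^{N+1}}=\bigoplus_{k\le N}B^kK_B$ only if $\deg p\le (N+1)s-1$, where $s$ is the multiplicity of the zero of $B$ at the origin, so for $s=0$ only $p=0$ qualifies. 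Hence your reduction to finite sums via ``Euclidean division'' does not produce a dense class, and density of finite $B$-expansions is itself essentially the surjectivity half of the theorem (note the theorem deliberately drops the hypothesis $B(0)=0$). Second, the pointwise comparison $|h|^2\asymp\|h\|_0^2$ on $\overline{\DD}$ is false, since elements of $K_B$ may vanish inside $\overline{\DD}$; only the upper bound holds by finite-dimensionality, so even the lower diagonal estimate needs a separate compactness argument.

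The paper's route (via \cite{CGGP}, as explained in the remark following the theorem) avoids all of this and you may want to adopt it. Fix a basis $e_0,\dots,e_{d-1}$ of $K_B$, write $h_k=\sum_i c_{k,i}e_i$ and regroup $\sum_k h_kB^k=\sum_{i=0}^{d-1}e_i\,(g_i\circ B)$ with $g_i(z)=\sum_k c_{k,i}z^k$. The two facts singled out there --- boundedness of multiplication by elements of the model space, and boundedness (with closed range) of composition with $B$ on $D_\alpha$ --- reduce the desired two-sided estimate to the trivial identity $\|g_i\|_\alpha^2=\sum_k|c_{k,i}|^2(k+1)^\alpha$, together with the $H^2$-orthogonality of the spaces $B^nK_B$; no Gram-matrix or Schur-test analysis is needed.
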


\begin{rem}
The previous theorem was stated in \cite{CGGP} for $\alpha \in \{-1, 0, 1\}$ and $B(0)=0$, but
the same scheme of proof  works bearing in
mind two key facts:
\begin{itemize}
\item[(i)] Multiplication by any function in the model space $H^2\ominus BH^2$ is a
bounded operator.
\item[(ii)] Composition with the Blaschke product is a bounded
operator in $D_{\alpha}$.
\end{itemize}
These are both easy to check and the only parts of the proof that
generalize in a non-obvious way. The assumption $B(0)=0$ is not
really necessary since the spaces $B^n K_B$ are still mutually
orthogonal in $H^2$, and hence, linearly independent
finite-dimensional spaces.
\end{rem}

We are now in a position to state the following:

\begin{thm}\label{GGPS}
Let $\alpha \in [-1,1]$ and $B$ a finite Blaschke product. Then there exists a norm $\|\cdot\|_{B}$ under which $B$ has the wandering subspace property in $D_{\alpha}$, that is, for any $\mathcal{M} \in Lat(M_B)$ we have
\begin{equation*}
[\mathcal{M}\ominus B \mathcal{M}]_{B}=\mathcal{M} \quad \mbox{with respect to } \|\cdot\|_{B}.
\end{equation*}
Moreover, for $B(z)=z^k$ and $|\alpha| \in [0,\log(2)/\log(k+1)]$, the
norm $\|\cdot\|_{B}$ coincides with the usual $D_\alpha$ norm $\|\cdot \|_{\alpha}$.
\end{thm}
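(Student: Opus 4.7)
My strategy is to use Theorem \ref{Thm CGP} to reduce the problem to a vector-valued shift question, and then apply Shimorin's machinery. Given a finite Blaschke product $B$, Theorem \ref{Thm CGP} writes each $f\in D_\alpha$ uniquely as $f=\sum_{k\geq 0}h_k B^k$ with $h_k\in K_B := H^2\ominus BH^2$, and provides the comparability $\|f\|_\alpha^2\asymp\sum_{k\geq 0}(k+1)^\alpha\|h_k\|_0^2$. I would therefore set
\[ \|f\|_B^2 := \sum_{k=0}^\infty (k+1)^\alpha\|h_k\|_0^2,\]
an equivalent norm on $D_\alpha$. Under this renorming, $(D_\alpha,\|\cdot\|_B)$ becomes isometrically isomorphic, via $f\mapsto(h_0,h_1,h_2,\ldots)$, to the $K_B$-valued weighted sequence space $D_\alpha(K_B)$, and $M_B$ is transported to the shift $S\colon(g_0,g_1,\ldots)\mapsto(0,g_0,g_1,\ldots)$, i.e.\ to multiplication by $z$ on $D_\alpha(K_B)$. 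It therefore suffices to prove the WSP for $S$.

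In this vector-valued setting, I would mimic Shimorin's proof for $M_z$ on scalar $D_\alpha$. Condition (i) ($\bigcap_n S^n D_\alpha(K_B)=\{0\}$) is immediate from the shift structure. For $\alpha\in[0,1]$ the shift $S$ is concave, since
\[\|S^2g\|_B^2-2\|Sg\|_B^2+\|g\|_B^2=\sum_{k\geq 0}\bigl[(k+3)^\alpha-2(k+2)^\alpha+(k+1)^\alpha\bigr]\|g_k\|_0^2\leq 0\]
by concavity of $t\mapsto t^\alpha$, and concave operators have WSP. For $\alpha\in[-1,0)$, I would verify Shimorin's inequality (ii) coordinate-wise: the form $2\|Sx\|_B^2+2\|y\|_B^2-\|x+Sy\|_B^2$ decouples into a non-negative $(2^{\alpha+1}-1)\|x_0\|_0^2$ term plus $2\times 2$ blocks in the pairs $(x_{m+1},y_m)$. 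Writing $a=(m+2)^\alpha$, $b=(m+3)^\alpha$, $c=(m+1)^\alpha$, each block is positive semidefinite iff $a\le 2bc/(b+c)$, equivalently $1/a\ge(1/b+1/c)/2$, which is exactly the midpoint concavity of $t\mapsto t^{-\alpha}$; this holds since $t^{-\alpha}$ is concave on $(0,\infty)$ for $\alpha\in[-1,0]$. Shimorin's theorem then yields the WSP for $S$, and hence for $M_B$ under $\|\cdot\|_B$.

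For the moreover statement, when $B(z)=z^k$ I would bypass the renorming and show that the usual norm $\|\cdot\|_\alpha$ itself does the job for $M_{z^k}$ provided $|\alpha|\le\log(2)/\log(k+1)$. For $\alpha\in[0,\log 2/\log(k+1)]$ (in fact all of $[0,1]$), concavity of $M_{z^k}$ on $(D_\alpha,\|\cdot\|_\alpha)$ follows from concavity of $t\mapsto t^\alpha$ by the same one-line computation. For $\alpha\in[-\log 2/\log(k+1),0)$ the same coordinate-wise decomposition of Shimorin's form applies, now with weights $(m+k+1)^\alpha$ and $(m+2k+1)^\alpha$; the critical constraint arises from the scalar piece associated to $|a_0|^2$ (equivalently the $(2,2)$ entry of the block at $m=0$), where positivity reduces to $(k+1)^{|\alpha|}\le 2$, i.e.\ precisely $|\alpha|\le\log 2/\log(k+1)$. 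In this regime we may therefore take $\|\cdot\|_B=\|\cdot\|_\alpha$.

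The main obstacle I anticipate is the $\alpha\in[-1,0)$ case: unlike the $\alpha\in[0,1]$ concavity argument, verifying Shimorin's inequality (ii) requires the harmonic-mean positivity of each $2\times 2$ block described above, which rests on the more delicate fact that $t\mapsto t^{-\alpha}$ is concave for $\alpha\in[-1,0]$. It is also in analysing these blocks for $B(z)=z^k$ that the sharp threshold $\log(2)/\log(k+1)$ arises naturally.
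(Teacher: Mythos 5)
Your proposal is correct and follows essentially the same route as the paper: renorm via the expansion of Theorem \ref{Thm CGP} so that $M_B$ becomes a ($K_B$-valued) weighted shift, then apply Shimorin's machinery (concavity for $\alpha\ge 0$, condition (ii) for $\alpha<0$, the latter reducing to the weight inequalities $2\omega_1\ge\omega_0$ and midpoint concavity of $n\mapsto\omega_n^{-1}$), and for $B(z)=z^k$ check the same inequalities with step $k$ in the usual norm, where the threshold $(k+1)^{|\alpha|}\le 2$ appears exactly as in the paper. Your explicit $2\times 2$ block decomposition and your remark that condition (ii) must be replaced by concavity when $\alpha>0$ merely spell out details the paper leaves implicit.
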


\begin{proof}
Given $B$ a finite Blaschke product, let $\|\cdot\|_{B}$ denote the norm defined by the
corresponding expression arising from Theorem \ref{Thm CGP}.
Then, the multiplication operator induced by $B$ acts exactly as the shift operator $M_z$ acts on
$D_\alpha$ with respect to $\|\cdot\|_{B}$;  therefore it satisfies property (ii) in Shimorin's
Theorem. Consequently, $M_B$ has the WSP.

The property (ii) for the shift in such spaces can be checked by
showing the following two properties:
\[\omega_1 \geq 1/2,\]
\[\omega_n (\omega_{n-1}+\omega_{n+1}) \leq 2 \omega_{n-1}
\omega_{n+1}, \quad n \geq 1,\]
where $\omega_n = (n+1)^\alpha$.

Finally, assume $B(z)=z^k$ and consider the usual norm $\|\cdot \|_{\alpha}$.  Let $\alpha \in
[-\frac{\log(2)}{\log(k+1)},0)$ and notice that in this case, the proof
of the second inequality above works in the same way as for $M_z$ with
$\omega_{n-1}$ substituted by $\omega_{n-k}$ and $\omega_{n+1}$
substituted by $\omega_{n+k}$. The first inequality is satisfied
substituting $\omega_1$ by $\omega_k$ precisely because $|\alpha|
\leq \frac{\log(2)}{\log(k+1)}$. If $\alpha \geq 0$ apply the same reasoning to $1/\omega_k$ to see that the operator is concave.\end{proof}

It seems worth mentioning that if we take $B(z)=z^2$ in Theorem \ref{GGPS}, the range of values of $\alpha$ for which the result holds without
renorming can actually be improved by moving the lower bound from $\alpha \geq -\log(2)/\log(3) \approx - 0.6309$ to $\alpha \geq -\log(2/3)/\log(5/3) \approx -0.7937$:

\begin{prop}\label{improve}
Let $\alpha \in [-\log(2/3)/\log(5/3), 0]$. Then the wandering subspace property holds for the operator of multiplication by $z^2$ in $D_\alpha$ equipped with its usual norm $\|\cdot \|_{\alpha}$.
\end{prop}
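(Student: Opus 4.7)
The plan is to follow the same strategy as the proof of Theorem \ref{GGPS}, verifying Shimorin's condition (ii) for $T=M_{z^2}$ on $D_\alpha$ with the usual norm, but to perform the analysis of the underlying weight inequalities with more care than the generic argument used for arbitrary $k$, exploiting the specific features of $k=2$. Concretely, I would expand
\[
\|x+z^2 y\|^2\le 2\bigl(\|z^2 x\|^2+\|y\|^2\bigr)
\]
for $x=\sum a_n z^n,\ y=\sum b_n z^n\in D_\alpha$ and collect the resulting inequality by Taylor degree, obtaining pointwise conditions: a coupled two-variable quadratic form in $(a_n,b_{n-2})$ for each $n\ge 2$, together with isolated contributions from $a_0$ and $a_1$. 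The paired conditions at $n\ge 2$ reduce to the positive semidefiniteness of $2\times 2$ matrices in the weights $\omega_n = (n+1)^\alpha$, which in turn is implied by the concavity of $t\mapsto t^{-\alpha}$ on $[0,\infty)$ for $-\alpha\in[0,1]$, hence holds throughout $\alpha\in[-1,0]$. The isolated $|a_1|^2$ term gives only $2^\alpha\ge 1/2$, i.e.\ $\alpha\ge-1$.

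The genuine binding constraint in the proof of Theorem \ref{GGPS} is the isolated $|a_0|^2$ inequality, which in its naive form reads $3^\alpha\ge 1/2$ and therefore gives $\alpha\ge -\log 2/\log 3$. The refinement I would pursue consists in coupling $|a_0|^2$ with the adjacent pair $(a_2,b_0)$ via a Cauchy--Schwarz argument. The key observation is that the RHS of Shimorin's inequality allots $2\omega_0|b_0|^2=2|b_0|^2$ to the $|b_0|^2$ slot, which is more generous than what the PSD condition on $(a_2,b_0)$ alone requires; the surplus can be transferred onto the $|a_0|^2$ side through a tuning parameter $\lambda\in(0,1)$. After optimizing $\lambda$, the binding condition becomes $3\omega_4\ge 2\omega_2$, that is $(5/3)^\alpha\ge 2/3$, which rearranges to the stated bound $\alpha\ge -\log(2/3)/\log(5/3)$.

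The main obstacle will be to carry out the $\lambda$-optimization sharply and to check that the resulting three-variable quadratic form in $(a_0,a_2,b_0)$ is positive semidefinite exactly on the claimed interval for $\alpha$. One must simultaneously confirm that the rearrangement does not impose additional constraints on the pairwise PSD inequalities at degrees $n\ge 3$, which are already close to saturation at the endpoint $\alpha=-1$; this amounts to verifying that the concavity argument for $t\mapsto t^{-\alpha}$ continues to deliver enough slack after the redistribution.
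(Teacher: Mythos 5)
Your overall framework (reducing Shimorin's condition (ii) for $M_{z^2}$ to degree-by-degree weight inequalities) is sound, and you have even landed on the correct final threshold $3\omega_4\ge 2\omega_2$, i.e.\ $(5/3)^\alpha\ge 2/3$. But the central step --- transferring ``surplus'' from the $|b_0|^2$ slot onto the $|a_0|^2$ slot --- cannot work, and this is a fatal gap. The quadratic form $2\bigl(\|z^2x\|^2+\|y\|^2\bigr)-\|x+z^2y\|^2$ is block-diagonal in the Taylor coefficients: the degree-$0$ block involves only $a_0$ (since $z^2y$ contributes nothing in degrees $0$ and $1$), the degree-$2$ block involves only the pair $(a_2,b_0)$, and so on, with no cross term linking $a_0$ to $(a_2,b_0)$. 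Consequently the ``naive'' degree-$0$ condition $\omega_0\le 2\omega_2$ is not an artifact of a crude estimate; it is a genuine necessary condition for Shimorin's inequality, witnessed by $x\equiv 1$, $y=0$, which forces $\|1\|^2\le 2\|z^2\|^2$, i.e.\ $1\le 2\cdot 3^{\alpha}$. For any $\alpha<-\log 2/\log 3\approx -0.6309$ (e.g.\ $\alpha=-0.7$, well inside the claimed interval) this fails, so Shimorin's condition (ii) simply does not hold for $M_{z^2}$ with the usual norm there, and no Cauchy--Schwarz redistribution can rescue it: setting $b_0=0$ annihilates the surplus while leaving the deficit in $|a_0|^2$ intact.

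The paper closes exactly this gap with a two-step argument that your proposal is missing. First it renorms: it keeps $\omega_n=(n+1)^\alpha$ for $n\ge 1$ but frees $\omega_0$, and asks when some choice of $\omega_0$ satisfies simultaneously $\omega_0\le 2\omega_2$ and the $n=2$ concavity condition $1/\omega_0+1/\omega_4\le 2/\omega_2$; the resulting window $\bigl(2\cdot 3^{-\alpha}-5^{-\alpha}\bigr)^{-1}\le\omega_0\le 2\cdot 3^{\alpha}$ is nonempty precisely when $(5/3)^{\alpha}\ge 2/3$, which is your threshold $\approx -0.7937$. Second, and crucially, it transfers the conclusion back to the usual norm: the two inner products differ only by a multiple of $f(0)\overline{g(0)}$, which vanishes whenever $g\in z^2\mathcal{M}$, so $\mathcal{M}\ominus z^2\mathcal{M}$ is literally the same subspace for both norms, and the invariant subspace it generates is also the same since the norms are equivalent. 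If you replace your redistribution step by this renorming-plus-transfer argument, the rest of your degree-by-degree analysis (the $2\times 2$ positive-semidefinite blocks for $n\ge 3$ and the isolated $|a_1|^2$ condition) can be kept as is.
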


\begin{proof}
First note that we can define a norm in $D_\alpha$, given by a weight $\omega$ that makes multiplication by $z^2$ on $D_\alpha$
space satisfy the Shimorin condition just by changing the weights on
the first coordinate ($\omega_0 =\|1\|^2$): Indeed, define the weight $\omega$ by $\omega_k=(k+1)^\alpha$ for $k \geq 1$ and $\omega_0$ will be determined later. Condition (i) in Shimorin's Theorem is trivially satisfied and condition (ii) is equivalent to meeting all of the following:
\begin{itemize}
\item[(a)] $\omega_0 \leq 2 \omega_2.$
\item[(b)] $\omega_1 \leq 2 \omega_3.$
\item[(c)] $(1/\omega_{n-2} + 1/\omega_{n+2} -2/ \omega_{n}) \leq 0$ for all $n \geq 2.$
\end{itemize}

Property (b) is equivalent to $2^\alpha \leq 2^{2 \alpha +1}$, which is immediately checked since $\alpha \geq -1$. Standard calculus techniques show the validity of (c), for $n \geq 3$ and we are left with finding $\omega_0$ such that
\[ \frac{1}{2\cdot 3^{-\alpha} -  5^{-\alpha}} \leq  \omega_{0} \leq  2 \cdot 3^\alpha .\]
Therefore, if we assume
\[ 1 \leq (2\cdot 3^{-\alpha} -  5^{-\alpha})(2 \cdot 3^\alpha),\]
there is a valid choice of $\omega_0$ such that $\omega$ defines a norm in $D_\alpha$ for which the WSP holds. The latter equation is equivalent to
\[\alpha \geq \frac{\log(2/3)}{\log(5/3)}.\]
Now we know that for any $z^2$-invariant $\mathcal{M}$, the space $\mathcal{M} \ominus z^2 \mathcal{M}$ is exactly the same under the original norm and the new norm, and so even if the norm is different, whether or not the WSP holds does not change. So we get the desired result under the original norm.
\end{proof}

\begin{rem}
One could be inclined to think that the WSP for $z^k$ on $A^2$ follows from that on $z^k A^2$, shown in the previous proposition, based on its finite codimension as a subspace of $A^2$. However, it follows from \cite{Secozk} that $\C^{22} \oplus z^6 A^2$ fails to have the $z^6$ WSP if we equip $\C^{22}$ with the weight $\omega_t=(t+1)^{-16}$ for $t=0,...,21$. This space still contains $z^6 A^2$ as a finite codimension subspace.
\end{rem}

\begin{prop}\label{zkA2}
Let $k \in \N$, $\alpha \in [-1,0]$ and $\mathcal{M}=z^k D_\alpha$. Then $z^k$ has the wandering subspace property in $\mathcal{M}$.
\end{prop}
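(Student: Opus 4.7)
The plan is to apply Shimorin's theorem directly to $T = M_{z^k}|_{\mathcal{M}}$ on $\mathcal{M} = z^k D_\alpha$ equipped with the inherited $D_\alpha$-norm $\|\cdot\|_\alpha$. The key point is that restricting to $\mathcal{M}$ effectively shifts the index of the weight sequence by $k$, which is exactly what makes the base case of Shimorin's condition (delicate in Theorem~\ref{GGPS}) hold automatically on the whole range $\alpha \in [-1,0]$.

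First I would decompose $\mathcal{M} = \bigoplus_{j=0}^{k-1} V_j$, where $V_j$ is the closed linear span of $\{z^{j+mk} : m \ge 1\}$. The subspaces $V_j$ are pairwise orthogonal in $D_\alpha$ (distinct monomials are orthogonal in every $D_\alpha$) and each is invariant under $T$. On $V_j$, setting $e_m^{(j)} := z^{j+(m+1)k}$ for $m \ge 0$, the operator $T|_{V_j}$ acts as the forward shift $e_m^{(j)} \mapsto e_{m+1}^{(j)}$, and is therefore unitarily equivalent to a one-step weighted shift on $\ell^2$ with weight sequence $\sigma_m^{(j)} := \|e_m^{(j)}\|_\alpha^2 = (j+(m+1)k+1)^\alpha$. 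Since Shimorin's inequality $\|x+Ty\|^2 \le 2(\|Tx\|^2 + \|y\|^2)$ is trivially preserved under orthogonal direct sums of operators, it will suffice to check it for each of these weighted shifts separately.

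For each fixed $j$, the standard reduction recalled in the proof of Theorem~\ref{GGPS} shows that Shimorin's condition (ii) for the shift with weights $(\sigma_m^{(j)})$ amounts to $\sigma_0^{(j)} \le 2\sigma_1^{(j)}$ together with $\sigma_m^{(j)}(\sigma_{m-1}^{(j)}+\sigma_{m+1}^{(j)}) \le 2\sigma_{m-1}^{(j)}\sigma_{m+1}^{(j)}$ for all $m \ge 1$. The first rewrites as $((j+2k+1)/(j+k+1))^{-\alpha} \le 2$, which holds because the ratio lies in $[1,2)$ and $-\alpha \in [0,1]$. The second, after dividing by $\sigma_{m-1}^{(j)}\sigma_m^{(j)}\sigma_{m+1}^{(j)}$, becomes
\begin{equation*}
(j+mk+1)^{-\alpha} + (j+(m+2)k+1)^{-\alpha} \le 2(j+(m+1)k+1)^{-\alpha},
\end{equation*}
the midpoint concavity of $t \mapsto t^{-\alpha}$ at the arithmetic mean of the outer two indices; this is valid because $t^{-\alpha}$ is concave on $(0,\infty)$ whenever $-\alpha \in [0,1]$. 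Condition (i) of Shimorin's theorem is immediate since $\bigcap_n T^n \mathcal{M} = \bigcap_n z^{(n+1)k} D_\alpha = \{0\}$, and the theorem then yields the WSP on $\mathcal{M}$. I do not anticipate a real obstacle: once the direct-sum decomposition is in place, the entire argument reduces to the same two elementary inequalities that appear in Theorem~\ref{GGPS}, the substantive gain being that the starting index of the weight sequence has moved from $1$ to $k+1$, which eliminates the restriction $|\alpha| \le \log(2)/\log(k+1)$.
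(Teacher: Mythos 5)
Your proof is correct and follows essentially the same route as the paper: both apply Shimorin's theorem to $M_{z^k}$ on $z^kD_\alpha$ and reduce condition (ii) to the base inequality $\omega_s\le 2\omega_{s+k}$ for $s=k,\dots,2k-1$ and the reciprocal-concavity inequality for $s\ge k$, the whole point being that the starting index has moved up to $k$. The only cosmetic differences are that you make the orthogonal decomposition into $k$ one-step weighted shifts explicit and verify the second inequality via midpoint concavity of $t\mapsto t^{-\alpha}$, whereas the paper checks it by showing $g(s)=(s+1)^{-\alpha}-(s+k+1)^{-\alpha}$ is increasing, which is the same fact.
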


\begin{proof}
First, we can assume $\alpha <0$. For $s \in \N$ denote by $\omega_s = (s+1)^\alpha$. The condition (ii) of Shimorin's Theorem becomes equivalent to
\begin{itemize}
\item[(a)] $\omega_{s} \leq 2 \omega_{k+s}$, for all $s=k,...,2k-1$, and
\item[(b)] $(1/\omega_{s} + 1/\omega_{s+2k} -2/ \omega_{s+k}) \leq 0$ for all $s \geq k.$
\end{itemize}
To see (a), notice that the minimum of $(s+1)/(k+s+1)$ for $s=k,...,2k-1$ is achieved at $s=k$, that such minimum is therefore bigger than $1/2$ and that $\alpha \geq -1$.
In order to check (b), it suffices to see that the quantity \[g(s)= (s+1)^{-\alpha} - (s+k+1)^{-\alpha}\] is negative and increasing on $s$.  Negativity is clear since the exponent $-\alpha$ is positive and $(s+k+1) \geq (s+1)$. Moreover $g'(s) = |\alpha| ((s+1)^{-\alpha-1} - (s+k+1)^{-\alpha-1})$, which is positive since $\alpha \geq -1$.
\end{proof}

\begin{rem}
Proposition \ref{zkA2} may be interpreted as a property of the \emph{subspace} $z^k D_\alpha$ or as a property of the \emph{equivalent norm on $D_\alpha$} given by $\|f\|:=\|S^k f\|_{\alpha}$, that is, as a property of $D_\alpha$ with this particular choice of equivalent norm. In this sense, it yields a different proof of Theorem \ref{GGPS} for the case when $B$ is a monomial.
\end{rem}

We conclude with the following result, which reduces the question further.

\begin{cor}
Let $k \geq 1$, $\alpha \in [-1,0]$, and $\mathcal{M}$ be a $z^k$-invariant subspace of $D_{\alpha}$. Then \[\mathcal{M}=[\mathcal{M} \ominus z^{2k} \mathcal{M}]_{z^k}.\]
\end{cor}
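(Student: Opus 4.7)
The plan is to mirror the decomposition argument of Proposition \ref{prop1}, invoking Proposition \ref{zkA2} in place of the Aleman-Richter-Sundberg theorem. Since $\mathcal{M}$ is closed and $z^k$-invariant, and multiplication by $z^k$ is bounded below on $D_{\alpha}$ when $\alpha \in [-1,0]$ (the ratio $\|z^k f\|^2/\|f\|^2$ is minimized at the monomial $f=1$ and equals $(k+1)^{\alpha}>0$), the subspaces $z^k \mathcal{M}$ and $z^{2k}\mathcal{M}$ are closed. This gives the nested chain $\mathcal{M} \supset z^k \mathcal{M} \supset z^{2k}\mathcal{M}$, and hence the orthogonal decomposition
\[
\mathcal{M} \ominus z^{2k}\mathcal{M} = (\mathcal{M} \ominus z^k \mathcal{M}) \oplus (z^k \mathcal{M} \ominus z^{2k}\mathcal{M}).
\]

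The central step is to observe that $z^k \mathcal{M}$ is a $z^k$-invariant closed subspace of $z^k D_{\alpha}$. The weight inequalities (a) and (b) established in the proof of Proposition \ref{zkA2} are precisely equivalent to Shimorin's inequality $\|x + Ty\|^2 \leq 2(\|Tx\|^2 + \|y\|^2)$ for $T = M_{z^k}$ and arbitrary $x, y \in z^k D_{\alpha}$. This pointwise inequality then holds in particular for all $x, y$ in the smaller space $z^k \mathcal{M}$, while condition (i) of Shimorin's theorem is trivially inherited from the ambient $D_\alpha$. Applying Shimorin's theorem to the restriction of $M_{z^k}$ to $z^k \mathcal{M}$ therefore yields
\[
z^k \mathcal{M} = [z^k \mathcal{M} \ominus z^{2k}\mathcal{M}]_{z^k}.
\]

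Combining the two displays, the closed $z^k$-invariant subspace $[\mathcal{M} \ominus z^{2k}\mathcal{M}]_{z^k}$ contains both summands on the right of the first decomposition, and hence contains $(\mathcal{M} \ominus z^k \mathcal{M}) \oplus z^k \mathcal{M} = \mathcal{M}$. The reverse inclusion $[\mathcal{M} \ominus z^{2k}\mathcal{M}]_{z^k} \subset \mathcal{M}$ is immediate since $\mathcal{M}$ is closed and $z^k$-invariant. The only point requiring care is the transfer of Shimorin's hypotheses from $z^k D_\alpha$ down to the invariant subspace $z^k \mathcal{M}$; this, however, is a universal feature of Shimorin's inequality and does not pose a genuine obstacle.
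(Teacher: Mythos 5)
Your proposal is correct and follows essentially the same route as the paper: the orthogonal decomposition $\mathcal{M} \ominus z^{2k}\mathcal{M} = (\mathcal{M} \ominus z^k\mathcal{M}) \oplus (z^k\mathcal{M} \ominus z^{2k}\mathcal{M})$ combined with an application of Proposition \ref{zkA2} to the invariant subspace $z^k\mathcal{M} \subset z^k D_\alpha$. Your extra remarks on the lower bound for $M_{z^k}$ (hence closedness of $z^k\mathcal{M}$) and on the inheritance of Shimorin's inequality by subspaces are points the paper leaves implicit, but they do not change the argument.
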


\begin{proof}
Denote $T:=M_{z^k}$ acting on $D_\alpha$. Let $\mathcal{M}$ be a closed $T$-invariant subspace of $D_\alpha$. Then $\mathcal{N} := T\mathcal{M}$ is a $T$-invariant subspace. Moreover, $\mathcal{N} \subset TD_\alpha$ and hence, by Proposition \ref{zkA2} we have
\[\mathcal{N} = [\mathcal{N} \ominus T\mathcal{N}]_{T}.\]
Now we can see that \[\mathcal{M} \ominus T^2\mathcal{M} = (\mathcal{M} \ominus T\mathcal{M}) \oplus (T\mathcal{M} \ominus T^2\mathcal{M}).\]
So the smallest closed $T$-invariant subspace containing $\mathcal{M} \ominus T^2 \mathcal{M}$ contains both $\mathcal{M} \ominus \mathcal{N}$ and $\mathcal{N}$, and so, it is $\mathcal{M}$.
\end{proof}

\end{document}